\newtheorem{thm}{Theorem}
\newtheorem{cor}[thm]{Corollary}
\newtheorem{lem}[thm]{Lemma}
\newtheorem{prop}[thm]{Proposition}
\newtheorem{defi}[thm]{Definition}
\newtheorem{rem}[thm]{Remark}
\newtheorem{ejem}[thm]{Example}
\title{Energy  and Randic index of directed graphs}
\author{Gerardo Arizmendi and Octavio Arizmendi\thanks{O.A. received support from Conacyt Grant CB-2017-2018-A1-S-9764 an from the European Union's Horizon 2020 research and innovation programme under the Marie Sk\l{}odowska-Curie grant agreement No 734922. He thanks Roland Speicher and Moritz Weber for the nice environment at University of Saarlands, while visiting for a sabbatical period. \\~}}
\date{ \today}
\begin{document}

\maketitle

\begin{abstract}
The concept of Randic index has been extended recently for a digraph. We prove that  $2R(G)\leq \mathcal{E}(G)\leq 2\sqrt{\Delta(G)} R(G)$, where $G$ is a digraph, and  $R(G)$ denotes the Randic index, $\mathcal{E}(G)$ denotes the Nikiforov energy and $\Delta(G) $ denotes the maximum degree of $G$. In both inequalities we describe the graphs for which the equality holds.
\end{abstract}

\noindent \textbf{Key words}: \emph{Randic index; graph energy; digraphs, vertex energy}

\noindent \textbf{MSC 2010}. 05C50, 05C09.
\section{Introduction}


The energy of graph and the Randic index are well known graph invariants defined from considerations in chemical graph theory. This two quantities are known to be good descriptors of any graph and their properties have been explored in networks. Since for many networks the relation between nodes is non-symmetric it is natural to study these descriptors for directed graphs, where, for example, the inner and outer degrees play very different roles in terms of the ``flow" structure of the network.

The following inequalities have been proven in \cite{AyA} and \cite{YLPL}:
\begin{equation}\label{ME}
2R(G)\leq \mathcal{E}(G)\leq 2\sqrt{\Delta(G)} R(G),
\end{equation}
where $G$ is an (undirected) simple graph, $R(G)$ the Randic index of $G$, $\mathcal{E}(G)$ the energy of $G$ and $\Delta(G)$ the maximum degree of $G$. A notion of energy of a digraph was defined in \cite{Nik}. Recently, the definition of Randic index  has been extended and studied for digraphs \cite{BMR,MR}. The main purpose of this paper is to extend the inequality in (\ref{ME}) to directed graphs. This is proved in Theorem \ref{mainT2} and Theorem \ref{mainT3}. In this case, since the adjacency matrix of a directed graph is not symmetric in general and our definitions of energy and Randic index change, we need adapt our methods. In particular, we define inner and outer energies which we relate with the inner and outer degrees.

In order to describe the graphs for which the equalities in (\ref{ME}) hold we use, what we call, the Hermitianization trick, which relates the energy of a digraph with the energy of a bipartite graph. Moreover, this technique allows to give another proof to 
Theorems \ref{mainT2} and \ref{mainT3}.

Apart from this introduction, the paper is organized as follows. In Section \ref{secenergy}, we introduce the energy of a digraph as defined by Nikiforov. We also define the outer energy of a vertex $\mathcal E^+(v)$ and inner energy of a vertex $\mathcal E^-(v)$ and prove that for adjacent vertices ${\cal E}^+(v_i){\cal E}^-(v_j)\geq1$. In Section \ref{secRE}, we prove the main results of the paper, namely that inequalities in ($\ref{ME}$) are satisfied for digraphs and their corresponding Randic index and energy. Section \ref{secH} is devoted to the Hermitianization trick. We use this technique to give another proof of the main theorems of the paper and to describe the graphs in which the equalities in (\ref{ME}) are fullfilled.





\section{Digraph energy and vertex energy}\label{secenergy} 

A (finite) directed graph or digraph is a pair $G=(V,E)$ where $V$ is a finite set and $E\subset V\times V$, the elements in $V$ are called vertices and the elements in $E$ are called edges. We assume that $G$ is simple, i.e.  $(v,v)\notin E$ for all $v\in V$. For a digraph $G$ with $n$ vertices, the adjacency matrix,  denoted by $A=A(G)$, is the $n\times n$ matrix with entries 
$A_{ij}=1$ if $(v_i,v_j)\in E$ and $A_{ij}=0$ otherwise. 


The energy of a graph is given by the trace of the absolute value of its adjacency matrix \cite{Gut,GR,LSG}. In analogy to graphs, the energy of a digraph is defined as follows.  Let us consider a digraph $G=(V,E)$ with adjacency matrix $A\in M_n(\mathbb{R})$. One can define the absolute value of 
$A$ in two ways:  $$|A|^+=(AA^t)^{1/2}$$
$$|A|^-=(A^tA)^{1/2}.$$
both definitions coincide when $A$ is symmetric, i.e. $G$ is undirected. 

\begin{defi}
The energy of a digraph $G$, denoted  $\mathcal{E}(G)$, is given by 
$$\mathcal{E}(G)=Tr(|A|^+)=Tr(|A|^-)$$
where A is the adjacency matrix of $G$.
\end{defi}
Note that, $$\mathcal{E}(G)=\sum^n_{i=1} \sigma_i$$ where $(\sigma_i)^n_{i=1}$ denote the set of singular values of $A(G)$, counted with multiplicities.

In \cite{AJ}, the energy of a vertex is introduced. We extend this definition to digraphs.
\begin{defi}
The outer energy of the vertex $v_i$ with respect to $G$, which is denoted by $\mathcal{E}^+_G(v_i)$, is given by
\begin{equation}
  \mathcal{E}^+_G(v_i)=|A|^+_{ii}, \quad\quad~~~\text{for } i=1,\dots,n,
\end{equation}
where $|A|^+=(AA^t)^{1/2}$ and $A$ is the adjacency matrix of $G$.
\end{defi}

\begin{defi}
The inner energy of the vertex $v_i$ with respect to $G$, which is denoted by $\mathcal{E}^-_G(v_i)$, is given by
\begin{equation}
  \mathcal{E}^-_G(v_i)=|A|^-_{ii}, \quad\quad~~~\text{for } i=1,\dots,n,
\end{equation}
where $|A|^-=(A^tA)^{1/2}$ and $A$ is the adjacency matrix of $G$.
\end{defi}

In this way the energy of a graph is given by the sum of the individual energies of the vertices of $G$,
\begin{equation*}
  \mathcal{E}(G)=\mathcal{E}^+_G(v_1)+\cdots+\mathcal{E}^+_G(v_n)=\mathcal{E}^-_G(v_1)+\cdots+\mathcal{E}^-_G(v_n).
\end{equation*}

It is important to remark, as observed by \cite{Benzi} that, in general, the inner and outer energy of a vertex do not coincide.

\subsection{Energy of adjacent vertices} 

The next theorem is fundamental for the proof of our main result.

\begin{thm}\label{T1}
Let $v_i$ and $v_j$ be connected vertices of a simple digraph G. Then ${\cal E}^+(v_i){\cal E}^-(v_j)\geq1$. 
\end{thm}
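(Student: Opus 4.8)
The plan is to reduce the statement to a single application of the Cauchy--Schwarz inequality by passing to the singular value decomposition of the adjacency matrix. Write $A = U\Sigma V^{t}$, where $U,V$ are orthogonal and $\Sigma = \mathrm{diag}(\sigma_1,\dots,\sigma_n)$ collects the (non-negative) singular values. Then $AA^{t} = U\Sigma^{2}U^{t}$ and $A^{t}A = V\Sigma^{2}V^{t}$, so taking the positive semidefinite square roots gives $|A|^{+} = U\Sigma U^{t}$ and $|A|^{-} = V\Sigma V^{t}$. Reading off the relevant diagonal entries, I obtain the two expressions
\begin{equation*}
\mathcal{E}^{+}(v_i) = (|A|^{+})_{ii} = \sum_{k=1}^{n}\sigma_k U_{ik}^{2}, \qquad \mathcal{E}^{-}(v_j) = (|A|^{-})_{jj} = \sum_{k=1}^{n}\sigma_k V_{jk}^{2},
\end{equation*}
both of which are manifestly non-negative.

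Next I would expand the $(i,j)$ entry of $A$ itself in the same decomposition, namely $A_{ij} = (U\Sigma V^{t})_{ij} = \sum_{k=1}^{n}\sigma_k U_{ik}V_{jk}$. Since every $\sigma_k \geq 0$, the bilinear form $\langle x,y\rangle = \sum_k \sigma_k x_k y_k$ is a genuine (semi-definite) inner product, and applying Cauchy--Schwarz to the vectors $(U_{ik})_k$ and $(V_{jk})_k$ yields
\begin{equation*}
A_{ij}^{2} = \Big(\sum_{k=1}^{n}\sigma_k U_{ik}V_{jk}\Big)^{2} \leq \Big(\sum_{k=1}^{n}\sigma_k U_{ik}^{2}\Big)\Big(\sum_{k=1}^{n}\sigma_k V_{jk}^{2}\Big) = \mathcal{E}^{+}(v_i)\,\mathcal{E}^{-}(v_j).
\end{equation*}
This inequality in fact holds for every pair of indices $(i,j)$; the hypothesis that $v_i$ and $v_j$ are connected enters only to fix the value of the left-hand side.

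Finally, since $v_i$ and $v_j$ are adjacent we have $(v_i,v_j)\in E$, hence $A_{ij} = 1$, and the displayed bound becomes $1 \leq \mathcal{E}^{+}(v_i)\,\mathcal{E}^{-}(v_j)$, which is the claim. The only point requiring care --- and the step I would regard as the main obstacle --- is the passage to the correct (positive semidefinite) square roots so that $|A|^{\pm}$ are read off the same orthogonal factors $U,V$ that diagonalize $AA^{t}$ and $A^{t}A$; once this is set up, the non-negativity of the $\sigma_k$ is exactly what legitimizes the weighted Cauchy--Schwarz step. The same computation identifies the equality case: $\mathcal{E}^{+}(v_i)\mathcal{E}^{-}(v_j)=1$ forces the vectors $(\sqrt{\sigma_k}\,U_{ik})_k$ and $(\sqrt{\sigma_k}\,V_{jk})_k$ to be proportional, a rigidity condition I would expect to reappear when analyzing the equality cases of the main theorems.
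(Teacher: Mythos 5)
Your proposal is correct and follows essentially the same route as the paper: singular value decomposition, reading $\mathcal{E}^{+}(v_i)$ and $\mathcal{E}^{-}(v_j)$ off the diagonals of $U\Sigma U^{t}$ and $V\Sigma V^{t}$, expanding $A_{ij}=\sum_k \sigma_k U_{ik}V_{jk}=1$, and applying Cauchy--Schwarz. The only cosmetic difference is that you work directly with non-negative singular values and a weighted inner product, whereas the paper allows signed diagonal entries and absorbs $\mathrm{sign}(\lambda_k)\sqrt{|\lambda_k|}$ into the vectors before applying Cauchy--Schwarz --- the same computation in a slightly different dress.
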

\begin{proof}

 Let $A(G)$ be the adjacency matrix of $G$. Then, using the singular decomposition we can write $A(G)=UDV^t$, where $U=(u_{kl})$, $V=(v_{kl})$ are orthogonal and $D=(d_{kl})$ is a diagonal matrix. Let us denote $d_{kk}=\lambda_k$. From this, we have that $AA^t=UD^2U^t$ and $AA^t=VD^2V^t$, hence  
 $$|A|^+=U|D|U^t,$$
 $$|A|^-=V|D|V^t.$$
 
 A direct calculation shows that  ${\cal E}^+(v_i)=\sum_ku_{ik}^2|\lambda_k|$ and ${\cal E}^-(v_j)=\sum_kv_{jk}^2|\lambda_k|$. Moreover $A(G)_{ij}=\sum_ku_{ik}v_{jk}\lambda_k$. Since $v_i$ and $v_j$ are connected then $A(G)_{ij}=1$.

Now consider $$u=(u_{i1}\sqrt{|\lambda_1|},\dots,u_{in}\sqrt{|\lambda_n|})$$ 
and
$$v=(v_{j1}sign(\lambda_1)\sqrt{|\lambda_1|},\dots,v_{jn}sign(\lambda_n)\sqrt{|\lambda_n|})$$ 
then 
$$\left<v,w\right>^2=(\sum_ku_{ik}v_{jk}\lambda_k)^2=1$$
$$||v||^2=\sum_ku_{ik}^2|\lambda_k|={\cal E}^+(v_i)$$
$$||w||^2=\sum_kv_{jk}^2|\lambda_k|={\cal E}^-(v_j)$$
which proves the assertion by the Cauchy-Schwarz inequality.
\end{proof}


By the use of AM-GM inequality we observe that:
\begin{cor}
Let $v_i$ and $v_j$ be connected vertices of a simple digraph $G$. Then ${\mathcal E}^+(v_i)+{\mathcal E}^-(v_j)\geq2$.
\end{cor}

\section{Randic index and energy of digraphs}\label{secRE}


Let $G=(V,E)$ be a digraph. For a vertex $v\in V$, we denote by $d^+(v)$ and $d^{-}(v)$, the outer and inner degrees of $v$, respectively,  given by the cardinality of the sets
$$N^+(v)=\{w\in V| (v,w)\in E\}\mbox{ and } N^-(v)=\{w\in V| (v,w)\in E\}.$$

We denote by $a$ the number of edges (or arcs), i.e $a=|E|$. Observe that $\sum_{v\in V} d^+(v)=a=\sum_{v\in V} d^-(v)$. We denote by $\Delta(G)$ the maximum, over the vertices, of the inner and outer degrees.

 For a digraph $G=(V,E)$, the Randic index was defined in \cite{MR} and is given by
$$R(G)=\frac{1}{2}\sum_{(v,w)\in E} \frac{1}{\sqrt{{d^+(v)d^-(w)}}}.$$
where $d^+(v)$ and $d^-(v)$ correspond to the outer and inner degrees of $v$.

\begin{thm}\label{mainT2}
Let $G$ be a digraph with  energy $\mathcal{E}(G)$ and Randic index $R(G)$ then $\mathcal{E}(G)\geq 2R(G)$.  
\end{thm}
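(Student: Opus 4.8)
The plan is to combine the edge-wise bound of Theorem \ref{T1} with the AM--GM inequality and a change of order of summation. The starting point is that Theorem \ref{T1} guarantees $\mathcal{E}^+(v_i)\mathcal{E}^-(v_j)\geq 1$ for every arc $(v_i,v_j)\in E$, and in particular $\sqrt{\mathcal{E}^+(v_i)\mathcal{E}^-(v_j)}\geq 1$. This lets me insert vertex energies into each term of the Randic sum at no cost: for every $(v_i,v_j)\in E$,
$$\frac{1}{\sqrt{d^+(v_i)\,d^-(v_j)}}\leq \frac{\sqrt{\mathcal{E}^+(v_i)\,\mathcal{E}^-(v_j)}}{\sqrt{d^+(v_i)\,d^-(v_j)}}=\sqrt{\frac{\mathcal{E}^+(v_i)}{d^+(v_i)}}\,\sqrt{\frac{\mathcal{E}^-(v_j)}{d^-(v_j)}}.$$
Note that the denominators never vanish: if $(v_i,v_j)\in E$ then $d^+(v_i)\geq 1$ and $d^-(v_j)\geq 1$.

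Next I would apply the AM--GM inequality $\sqrt{ab}\leq (a+b)/2$ to each product on the right, with $a=\mathcal{E}^+(v_i)/d^+(v_i)$ and $b=\mathcal{E}^-(v_j)/d^-(v_j)$, and sum over all arcs. This gives
$$2R(G)=\sum_{(v_i,v_j)\in E}\frac{1}{\sqrt{d^+(v_i)\,d^-(v_j)}}\leq \frac{1}{2}\sum_{(v_i,v_j)\in E}\left(\frac{\mathcal{E}^+(v_i)}{d^+(v_i)}+\frac{\mathcal{E}^-(v_j)}{d^-(v_j)}\right).$$

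The final step is the bookkeeping that collapses this double sum back to the energy. Grouping the arcs by their tail, the term $\mathcal{E}^+(v_i)/d^+(v_i)$ occurs exactly $d^+(v_i)$ times (once per arc leaving $v_i$), so that $\sum_{(v_i,v_j)\in E}\mathcal{E}^+(v_i)/d^+(v_i)=\sum_i \mathcal{E}^+(v_i)=\mathcal{E}(G)$; symmetrically, grouping by head gives $\sum_{(v_i,v_j)\in E}\mathcal{E}^-(v_j)/d^-(v_j)=\sum_j \mathcal{E}^-(v_j)=\mathcal{E}(G)$, using the two equivalent expressions for $\mathcal{E}(G)$ recorded just after the definition. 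Substituting yields $2R(G)\leq \tfrac12(\mathcal{E}(G)+\mathcal{E}(G))=\mathcal{E}(G)$, which is the claim.

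I expect the only real content to be the reorganization in the last step: one must recognize that summing a vertex-indexed quantity over arcs reproduces the vertex-energy sum precisely because the multiplicity of each vertex is its out- (resp.\ in-) degree, and that this multiplicity exactly cancels the degree in the denominator. Theorem \ref{T1} does the heavy analytic lifting, while AM--GM is what allows the asymmetric pair $(\mathcal{E}^+,\mathcal{E}^-)$ to be split so that the two groupings both close up to $\mathcal{E}(G)$. A secondary point to verify is the degree-zero caveat noted above, which is automatic for arcs.
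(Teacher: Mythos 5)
Your proposal is correct and is essentially the paper's own argument run in reverse order: the paper defines $\mathcal{E}(e)=\mathcal{E}^+(v)/d^+(v)+\mathcal{E}^-(w)/d^-(w)$ for each arc $e=(v,w)$, shows $\sum_{e\in E}\mathcal{E}(e)=2\mathcal{E}(G)$ by exactly your tail/head grouping, and then bounds each $\mathcal{E}(e)$ from below by $2/\sqrt{d^+(v)d^-(w)}$ via AM--GM together with Theorem \ref{T1}. Same ingredients, same degree-cancellation bookkeeping, merely presented as an upper bound on $2R(G)$ rather than a lower bound on $\mathcal{E}(G)$.
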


\begin{proof}
Let $G=(V,E)$. For an edge $e=(v,w)$ define $\mathcal{E}(e)=\mathcal{E}^+(v)/deg^+(v)+\mathcal{E}^-(w)/deg^-(w)$.
Then, on one hand, \begin{eqnarray*}
\sum_{e\in E}\mathcal{E}(e)&=&\sum_{e\in E}\left( \frac{\mathcal{E}^+(v)}{deg^+(v)}+\frac{\mathcal{E}^-(w)}{deg^-(w)}\right)\\&=&\sum_{(v,w)\in E} \frac{\mathcal{E}^+(v)}{deg^+(v)}+\sum_{(v,w)\in E}\frac{\mathcal{E}^-(w)}{deg^-(w)}
\\&=&\sum_{v\in V}\sum_{v\sim w} \frac{\mathcal{E}^+(v)}{deg^+(v)}+ \sum_{w\in V(G)}\sum_{w\sim v} \frac{\mathcal{E}^-(w)}{deg^-(w)}\\
&=&\sum_{v\in V} \mathcal{E}^+(v)+\sum_{w\in V} \mathcal{E}^-(w)
=2\mathcal{E}(G).
\end{eqnarray*}

On the other hand, by the classical AM-GM inequality, if $e=(v,w)$,
$$\mathcal{E}(e):=
\frac{\mathcal{E}^+(v)}{deg^+(v)}+\frac{\mathcal{E}^-(w)}{deg^-(w)}\geq 2\sqrt{\frac{\mathcal{E}^+(v)\mathcal{E}^-(w)}{deg^+(v)deg^-(w)}}\geq2 \frac{1}{\sqrt{{deg^+(v)deg^-(w)}}},$$
where we used Theorem \ref{T1} in the last inequality.

Finally, summing over $e\in E(G)$ we obtain the desired inequality

$$\mathcal{E}(G)=\frac{1}{2}\sum_{e\in E(G)} \mathcal{E}(e)\geq  \sum_{(v,w)\in E(G)} \frac{1}{\sqrt{{d^+(v)d^-(w)}}}=2R(G)$$

\end{proof}

Now we prove and analogous result to the one in \cite{AFJ} for digraphs

\begin{lem}
For a digraph $G$ and a vertex $v_i\in G$
$$\mathcal{E}^+(G)(v_i) \leq\sqrt{d^+(i)}$$
and
$$\mathcal{E}^-(G)(v_i) \leq \sqrt{d^-(i)}$$
\end{lem}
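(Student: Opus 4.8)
The plan is to reduce both inequalities to a single observation about the diagonal entries of a positive semidefinite matrix, combined with the Cauchy--Schwarz inequality. First I would record the combinatorial identity that makes everything work: since the adjacency matrix $A$ has $0/1$ entries, the $i$-th diagonal entry of $AA^t$ is $(AA^t)_{ii}=\sum_j A_{ij}^2=\sum_j A_{ij}=d^+(i)$, and likewise $(A^tA)_{ii}=\sum_j A_{ji}^2=d^-(i)$. Thus the outer (resp.\ inner) degree of $v_i$ is exactly the $i$-th diagonal entry of $AA^t$ (resp.\ $A^tA$), whereas $\mathcal{E}^+(v_i)$ and $\mathcal{E}^-(v_i)$ are the corresponding diagonal entries of the square roots $|A|^+=(AA^t)^{1/2}$ and $|A|^-=(A^tA)^{1/2}$.

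Next I would reuse the spectral description from the proof of Theorem \ref{T1}. Writing the singular value decomposition $A=UDV^t$ with $U,V$ orthogonal and $D$ diagonal with entries $\lambda_k$, we have $|A|^+=U|D|U^t$, so $\mathcal{E}^+(v_i)=\sum_k u_{ik}^2|\lambda_k|$. At the same time, from $AA^t=UD^2U^t$ we read off $d^+(i)=(AA^t)_{ii}=\sum_k u_{ik}^2\lambda_k^2$. The only other fact needed is that $U$ is orthogonal, so its $i$-th row is a unit vector: $\sum_k u_{ik}^2=1$.

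The heart of the argument is then a single application of the Cauchy--Schwarz inequality to the vectors $(|u_{ik}|)_k$ and $(|u_{ik}\lambda_k|)_k$:
$$\mathcal{E}^+(v_i)=\sum_k u_{ik}^2|\lambda_k|=\sum_k |u_{ik}|\cdot|u_{ik}\lambda_k|\ \le\ \Big(\sum_k u_{ik}^2\Big)^{1/2}\Big(\sum_k u_{ik}^2\lambda_k^2\Big)^{1/2}=\sqrt{d^+(i)}.$$
Equivalently, one may regard $\mathcal{E}^+(v_i)=\sum_k u_{ik}^2|\lambda_k|$ and $d^+(i)=\sum_k u_{ik}^2\lambda_k^2$ as weighted averages with weights $u_{ik}^2$ summing to $1$, and apply Jensen's inequality to the concave function $t\mapsto\sqrt{t}$. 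The inner inequality then follows by the identical computation with $|A|^-=V|D|V^t$ and $A^tA=VD^2V^t$, using $\sum_k v_{ik}^2=1$ and the identity $(A^tA)_{ii}=d^-(i)$, so that $\mathcal{E}^-(v_i)=\sum_k v_{ik}^2|\lambda_k|\le\sqrt{\sum_k v_{ik}^2\lambda_k^2}=\sqrt{d^-(i)}$.

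I do not expect a genuine obstacle here: the whole statement amounts to the diagonal-entry comparison $(M^{1/2})_{ii}\le\sqrt{M_{ii}}$ for the positive semidefinite matrix $M=AA^t$ (resp.\ $A^tA$), which is immediate from Cauchy--Schwarz once the spectral data is in place. The only step requiring any care is the very first one, namely recognizing that for a $0/1$ matrix the diagonal of $AA^t$ records precisely the outer degrees (and that of $A^tA$ the inner degrees); once this identification is made, the estimate closes at once.
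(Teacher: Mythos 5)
Your proposal is correct and follows essentially the same route as the paper's own proof: both use the singular value decomposition $A=UDV^t$ to write $\mathcal{E}^+(v_i)=\sum_k u_{ik}^2|\lambda_k|$ and $d^+(i)=(AA^t)_{ii}=\sum_k u_{ik}^2\lambda_k^2$, and then apply Cauchy--Schwarz to the same pair of vectors, using that the rows of $U$ are unit vectors. The only cosmetic differences are that you treat the inner inequality by repeating the computation with $V$ (the paper invokes symmetry via the reversed digraph) and that you add the nice conceptual remark that the whole statement is the diagonal-entry bound $(M^{1/2})_{ii}\le\sqrt{M_{ii}}$ for positive semidefinite $M$.
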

We only prove the first inequality, the other one is analogous and also follows by taking $\tilde G$ which is the graph with the same edges as $G$ but with the directions reversed, i.e. $(v,w)\in G$ if and only if $(w,v)\in \tilde G$. 

Let $A(G)$ be the adjacency matrix of $G$. Then, using the singular decomposition, we can write $A(G)=UDV^t$, where $U=(u_{kl})$ and $V=(v_{kl})$ are orthogonal matrices and $D=(d_{kl})$ is  diagonal. Let us denote $d_{kk}=\lambda_k$. From this, we have that $AA^t=UD^2U^t$ and $|A|^+=UDU^t$. Again,  ${\cal E}^+(v_i)=\sum_ku_{ik}^2|\lambda_k|$. Observe now that the elements of the diagonal in $AA^t$ give us the outer degrees of $G$ hence $d^+(i)=(AA^t)_{ii}=\sum_ku_{ik}^2\lambda_k^2$. 

Now consider the vectors $v=(u_{i1}|\lambda_i|,\dots,u_{in}|\lambda_n|)$ and $w=(u_{i1},\dots,u_{in})$ then we have that $${\cal E}^+(v_i)^2=(\sum_ku_{ik}^2|\lambda_k|)^2\leq \sum_ku_{ik}^2\lambda_k^2\sum_ku_{ik}^2=\sum_ku_{ik}^2\lambda_k^2=d^+(v_i)$$ where the inequality follows by the Cauchy-Schwarz inequality for $v$ and $w$.




We now have a  McClelland's type inequality.

\begin{cor}
For a simple digraph $G$ with $n$ vertices and $a$ edges

$$\mathcal{E}(G)\leq \sum_{i=1}^n\sqrt{d^{\pm}_i}\leq\sqrt{an} $$
\end{cor}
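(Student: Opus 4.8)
The plan is to obtain both inequalities as essentially immediate consequences of the preceding lemma together with a single application of Cauchy-Schwarz. The crucial point is that the corollary combines two facts already at hand: the per-vertex bound $\mathcal{E}^+(v_i)\le\sqrt{d^+(i)}$ (equivalently $\mathcal{E}^-(v_i)\le\sqrt{d^-(i)}$) from the lemma, and the additive decomposition $\mathcal{E}(G)=\sum_{i=1}^n\mathcal{E}^+(v_i)=\sum_{i=1}^n\mathcal{E}^-(v_i)$ recorded in Section \ref{secenergy}.

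First I would prove the left inequality. Summing the bound of the lemma over all $n$ vertices gives $\mathcal{E}(G)=\sum_{i=1}^n\mathcal{E}^+(v_i)\le\sum_{i=1}^n\sqrt{d^+(i)}$, and repeating the argument with inner energies yields $\mathcal{E}(G)\le\sum_{i=1}^n\sqrt{d^-(i)}$. This is precisely why the statement can be written uniformly with the symbol $d^{\pm}_i$: either choice of orientation is valid.

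For the right inequality I would apply Cauchy-Schwarz to the vectors $(\sqrt{d^+_1},\dots,\sqrt{d^+_n})$ and $(1,\dots,1)$, obtaining $\sum_{i=1}^n\sqrt{d^+_i}\le\sqrt{n}\,\sqrt{\sum_{i=1}^n d^+_i}$. Here I would invoke the handshake identity noted earlier, $\sum_{v\in V}d^+(v)=a=\sum_{v\in V}d^-(v)$, so that $\sum_{i=1}^n d^+_i=a$ and the right-hand side collapses to $\sqrt{an}$; the same computation applies verbatim to the inner degrees.

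Since every ingredient — the lemma, the degree-sum identity, and Cauchy-Schwarz — is already available, I do not expect a genuine obstacle. The only point demanding care is bookkeeping with the $\pm$ notation: one must fix a single orientation (say outer degrees) and use it consistently in both the lemma bound and the degree-sum identity, so that the telescoping of the $\sqrt{d^{\pm}_i}$ terms and the value of $\sum_i d^{\pm}_i$ match.
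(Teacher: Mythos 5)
Your proof is correct and follows essentially the same route as the paper: the first inequality by summing the per-vertex lemma bound over the additive decomposition $\mathcal{E}(G)=\sum_i\mathcal{E}^{\pm}(v_i)$, and the second via the degree-sum identity $\sum_i d^{\pm}_i=a$. Your Cauchy--Schwarz step with the all-ones vector is exactly the QM--AM inequality the paper invokes, so there is no substantive difference.
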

\begin{proof}
The first inequality follows by summing the outer or inner energies and comparing each one with the respective degrees. The second one
follows using QM-AM inequality since $\sum d^{\pm}_i=a$.
\end{proof}



\begin{thm}\label{mainT3}
Let $G$ be a digraph with  energy $\mathcal{E}(G)$, Randic index $R(G)$ and maximum degree $\Delta(G)$ then $\mathcal{E}(G)\leq 2\sqrt{\Delta(G)}R(G)$.  
\end{thm}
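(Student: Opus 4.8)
```latex
The plan is to mirror the structure of the proof of Theorem \ref{mainT2}, but now producing an \emph{upper} bound on each edge contribution rather than a lower bound. The key resource is the Lemma just proved, which gives the pointwise bounds $\mathcal{E}^+(v_i)\leq\sqrt{d^+(i)}$ and $\mathcal{E}^-(v_i)\leq\sqrt{d^-(i)}$. First I would reuse the edge-summation identity established in Theorem \ref{mainT2}, namely
\[
\sum_{e\in E}\left(\frac{\mathcal{E}^+(v)}{deg^+(v)}+\frac{\mathcal{E}^-(w)}{deg^-(w)}\right)=2\mathcal{E}(G),
\]
so that the energy is again encoded as a sum over edges of the same quantity $\mathcal{E}(e)$.

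The heart of the argument is to bound $\mathcal{E}(e)$ from above by a multiple of $1/\sqrt{d^+(v)d^-(w)}$, the summand of $R(G)$. For an edge $e=(v,w)$, applying the Lemma to each term gives
\[
\frac{\mathcal{E}^+(v)}{deg^+(v)}+\frac{\mathcal{E}^-(w)}{deg^-(w)}\leq\frac{\sqrt{d^+(v)}}{d^+(v)}+\frac{\sqrt{d^-(w)}}{d^-(w)}=\frac{1}{\sqrt{d^+(v)}}+\frac{1}{\sqrt{d^-(w)}}.
\]
Now I would bound each reciprocal square root by the maximum degree: since $d^+(v)\geq 1$ and $d^-(w)\geq 1$ (both are positive because $e$ is an edge), I cannot directly compare to $\Delta(G)$ in the denominator, so instead I would factor out $1/\sqrt{d^+(v)d^-(w)}$ and bound the remaining factor. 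Writing
\[
\frac{1}{\sqrt{d^+(v)}}+\frac{1}{\sqrt{d^-(w)}}=\frac{\sqrt{d^-(w)}+\sqrt{d^+(v)}}{\sqrt{d^+(v)d^-(w)}},
\]
I would then use $\sqrt{d^+(v)},\sqrt{d^-(w)}\leq\sqrt{\Delta(G)}$ in the numerator to conclude that the numerator is at most $2\sqrt{\Delta(G)}$, giving
\[
\mathcal{E}(e)\leq\frac{2\sqrt{\Delta(G)}}{\sqrt{d^+(v)d^-(w)}}.
\]

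Finally, summing over all edges and dividing by two reproduces the Randic index:
\[
\mathcal{E}(G)=\frac{1}{2}\sum_{e\in E}\mathcal{E}(e)\leq\frac{1}{2}\sum_{(v,w)\in E}\frac{2\sqrt{\Delta(G)}}{\sqrt{d^+(v)d^-(w)}}=\sqrt{\Delta(G)}\sum_{(v,w)\in E}\frac{1}{\sqrt{d^+(v)d^-(w)}}=2\sqrt{\Delta(G)}R(G).
\]
I expect the main (only genuine) obstacle to be the algebraic step of combining the two reciprocal-square-root terms correctly and verifying that the numerator is indeed controlled by $2\sqrt{\Delta(G)}$ rather than by some other constant; everything else is a direct reuse of the edge-summation identity and the Lemma, so the proof should be short. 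An alternative that avoids the numerator manipulation is to bound each term $1/\sqrt{d^+(v)}$ by $\sqrt{\Delta(G)}/\sqrt{d^+(v)d^-(w)}$ after noting $1/\sqrt{d^-(w)}\leq\sqrt{\Delta(G)}/\sqrt{d^-(w)}$ is false in the wrong direction, so I would be careful to perform the factoring exactly as above rather than bounding term by term, which is where a sign-of-inequality slip could occur.
```
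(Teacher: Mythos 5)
Your proposal is correct and follows essentially the same route as the paper: both rewrite $\mathcal{E}(G)$ via the edge-summation identity from Theorem \ref{mainT2}, apply the Lemma's bounds $\mathcal{E}^+(v)\leq\sqrt{d^+(v)}$, $\mathcal{E}^-(w)\leq\sqrt{d^-(w)}$, combine the two reciprocal square roots over the common denominator $\sqrt{d^+(v)d^-(w)}$, and bound the resulting numerator by $2\sqrt{\Delta(G)}$. Your write-up is in fact slightly cleaner than the paper's, which contains a typo ($\sqrt{d^-(v)}$ where $\sqrt{d^-(w)}$ is meant) at the corresponding step.
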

 \begin{proof}
 
 \begin{eqnarray*}
  \mathcal{E}(G)&=&\frac{1}{2}\sum_{e\in E(G)}\mathcal{E}(e)\\
&=&\frac{1}{2}\sum_{(v,w)\in E(G)}\frac{\mathcal{E}^+(v)}{d^+(v)}+\frac{\mathcal{E}^-(w)}{d^-(w)}\\
&\leq& \frac{1}{2}\sum_{(v,w)\in E(G)}\frac{\sqrt{d^+(v)}}{d^+(v)}+\frac{\sqrt{d^-(w)}}{d^-(w)}\\
&=& \frac{1}{2}\sum_{(v,w)\in E(G)}\frac{1}{\sqrt{d^+(v)}}+\frac{1}{\sqrt{d^-(w)}}\\
&=& \frac{1}{2}\sum_{(v,w)\in E(G)}\frac{\sqrt{d^+(v)}+\sqrt{d^-(v)}}{\sqrt{d^+(v)d^-(w)}}\\
&\leq& (2\sqrt{\Delta(G)})\left(\frac{1}{2}\sum_{(v,w)\in E(G)}\frac{1}{\sqrt{d^+(v)d^-(w)}}\right)\\
&=&(2\sqrt{\Delta(G)})R(G)
 \end{eqnarray*}
 \end{proof}

\section{Hermitianization trick} \label{secH}

In this section we use the relations between different energies as observed in \cite{AJ}.

\subsection{Nikiforov's energy and $(n,m)$-bipartite graphs}
\label{bipartiteNik}

Recall  \cite{Nik}, that for a matrix $M$ the (Nikiforov) energy of $M$, $\mathcal{N}(M)$, is given by $$Tr(\sqrt{MM^T})=\sum^n_i\sigma_i(M),$$
where $(\sigma_i(M))^n_{i=1}$ denotes the set of singular values of $M$.

Now, the adjacency matrix $A$ of a bipartite graph whose parts have $r$ and $s$ vertices has the form
\begin{equation}\label{bipartite}
A =
\begin{pmatrix}
{\bf 0}_{r,r} & M \\
 M^T    & {\bf 0}_{s,s}
\end{pmatrix},
\end{equation}
where $M$ is a $(0,1)$-matrix of size $r \times s$ , and ${\bf 0}$ represents the zero matrix. Conversely, given a $(0,1)$-matrix $M$ of size $r\times s$, the matrix $A$ as in (\ref{bipartite}) is the adjacency matrix of a bipartite graph. Thus there is a one-to-one correspondence between $(r,s)$-bipartite graphs and $(0,1)$-matrices of size $r\times s$.

Moreover, it easily follows from the singular value decomposition that for any matrix $M$, the non-zero eigenvalues of the matrix $A$  in (\ref{bipartite}) are the nonzero singular values of $M$ together with their negatives and thus for any bipartite graph $G$ with adjacency matrix $A$ as in (\ref{bipartite}), one has
\begin{equation} \label{bipartite-Nikiforov}
   \mathcal{E}(G)=2\mathcal{N}(M).
\end{equation}

In other words, studying the energy of $(r,s)$-bipartite graphs corresponds to studying the Nikiforov's energy of $(0,1)$-matrices of size $r\times s$. 

\subsection{Digraphs and $(n,n)$-bipartite graphs.}

We are interested in the particular case when $r=s=n$. 

Let $G=(V,E)$ be a directed graph on $V=\{1,\dots,n\}$. We denote by $B(G)=(\overline{V},\overline{E})$ the undirected bipartite graph with vertex set $\overline{V} =\{1^{-},\dots,n^{-},1^{+},\dots,n^{+}\}$ and edge set described as follows: $(i,j)\in E$ if and only if $\{i^-,j^+\}\in \overline{E}$, i.e $i\to j\iff i^-\sim j^+$. 
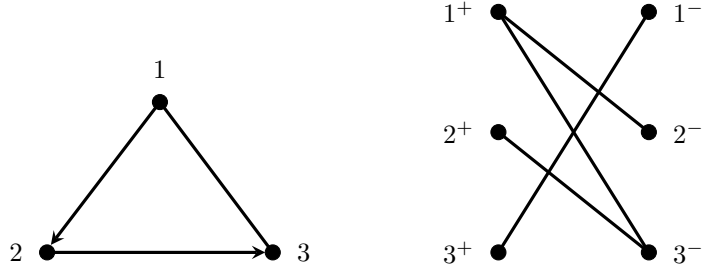
\begin{figure}[h]

\begin{center}

\begin{tikzpicture}[scale=2]

\draw [>=stealth, ->,very thick] (0-3,0) -- (1.45-3,0);
\draw [>=stealth, ->,very thick] (.75-3,1) --(.025-3,.05) ;
\draw [ very thick] (.75-3,1) -- (1.5-3,0);

\node[left] at (-0.1-3,0)  {$2$};
\node[right] at (1.6-3,0)  {$3$};
\node[above] at (.75-3,1.1)  {$1$};

\draw[fill] (0-3,0) circle [radius=0.05];
\draw[fill] (1.5-3,0) circle [radius=0.05];
\draw[fill] (.75-3,1) circle [radius=0.05];

\draw [very thick] (0,1.6) -- (1,.8);
\draw [very thick] (0,.8) -- (1,0);
\draw [very thick] (0,1.6) -- (1,0);
\draw [very  thick] (0,0) -- (1,1.6);

\node[left] at (-0.1,0)  {$3^+$};
\node[left] at (-0.1,.8)  {$2^+$};
\node[left] at (-0.1,1.6)  {$1^+$};
\node[right] at (1.1,0)  {$3^-$};
\node[right] at (1.1,.8)  {$2^-$};
\node[right] at (1.1,1.6)  {$1^-$};

\draw[fill] (0,0) circle [radius=0.05];
\draw[fill] (0,.8) circle [radius=0.05];
\draw[fill] (0,1.6) circle [radius=0.05];
\draw[fill] (1,0) circle [radius=0.05];
\draw[fill] (1,.8) circle [radius=0.05];
\draw[fill] (1,1.6) circle [radius=0.05];

\end{tikzpicture}

\caption{\label{fig3}  A directed graph and its corresponding bipartite graph.}

\end{center}

\end{figure}

Moreover, the relation between the adjacency matrices of these graphs is as in \eqref{bipartite}. Thus, \eqref{bipartite-Nikiforov}, we obtain a direct relation between their energies.

\begin{prop} \label{PropEn}
For any directed graph $G$,
   \begin{equation}
   2\mathcal{E}(G)=\mathcal{E}(B(G)).
\end{equation}
\end{prop}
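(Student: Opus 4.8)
The plan is to reduce the statement to the already-established identity \eqref{bipartite-Nikiforov} by recognizing that the adjacency matrix of $B(G)$ is exactly the Hermitianization block matrix built from $A(G)$. So the whole proof is an identification of matrices followed by an appeal to the previous subsection.

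First I would fix the ordering of the vertices of $\overline{V}$ as $(1^-,\dots,n^-,1^+,\dots,n^+)$ and write $A(B(G))$ in $2\times 2$ block form with respect to the bipartition $\{i^-\}\cup\{j^+\}$. Since $B(G)$ is bipartite, no two minus-vertices and no two plus-vertices are adjacent, so both diagonal blocks vanish. By the defining rule $i\to j \iff i^-\sim j^+$, the block recording adjacencies from the minus-part to the plus-part is the $(0,1)$-matrix $M$ with $M_{ij}=A(G)_{ij}$, i.e.\ $M=A(G)$. Hence
$$A(B(G))=\begin{pmatrix}{\bf 0}_{n,n}&A(G)\\ A(G)^T&{\bf 0}_{n,n}\end{pmatrix},$$
which is precisely the shape \eqref{bipartite} with $M=A(G)$ and $r=s=n$.

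Then I would invoke \eqref{bipartite-Nikiforov}, which gives $\mathcal{E}(B(G))=2\mathcal{N}(M)$ for any bipartite graph of this block form. Substituting $M=A(G)$ and recalling that by definition the digraph energy is $\mathcal{N}(A(G))=\sum_i\sigma_i(A(G))=\mathcal{E}(G)$, I obtain $\mathcal{E}(B(G))=2\mathcal{E}(G)$, as claimed. If one prefers a self-contained argument rather than citing \eqref{bipartite-Nikiforov}, the content is the singular value decomposition $A(G)=U\Sigma V^T$: the symmetric matrix above has eigenvalues $\pm\sigma_i$ (with eigenvectors assembled from the columns of $U$ and $V$) together with zeros, so the list of absolute values of its eigenvalues is $\{\sigma_i,\sigma_i\}_i$, and since for the undirected graph $B(G)$ the energy equals the sum of the absolute values of the eigenvalues, this sum is $2\sum_i\sigma_i=2\mathcal{N}(A(G))$.

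There is essentially no genuine obstacle in this proposition; it is a packaging of the Hermitianization setup. The only point requiring care is pinning down the vertex ordering so that the block identification $M=A(G)$ is unambiguous (a different convention would transpose $M$ or swap the blocks, which changes nothing since $\mathcal{N}(M)=\mathcal{N}(M^T)$). Everything else is bookkeeping and a direct appeal to the prior subsection.
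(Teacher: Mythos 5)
Your proof is correct and is exactly the paper's argument: the paper also proves this proposition by observing that $A(B(G))$ has the block form \eqref{bipartite} with $M=A(G)$ and then applying \eqref{bipartite-Nikiforov}. Your added SVD remark and the note on ordering conventions are fine but not a departure from that route.
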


In order to prove our main theorem we need to relate the Randic index of $G$ with the Randic index of $B(G)$, which amounts

\begin{lem}
For any directed graph $G$ with vertex set $[n]=\{1,\dots,n\}$, and $B(G)$ with vertex set $\{1^-,\dots,n^-,1^+,\dots,n^+\}$, the following relations holds, for any $i\in[n]$:
$$d^+(i)=d(i^+), \qquad d^-(i)= d(i^-),$$
where $d^+:V\to \mathbb{N}$,$d^-=V\to \mathbb{N}$ denote the out-degree and in-degree in G, and $d:\overline{V}\to \mathbb{N}$ denotes de degree in $B(G)$. 
\end{lem}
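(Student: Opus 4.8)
The plan is to prove both equalities by directly counting the edges of $B(G)$ incident to a given bipartite vertex and matching them bijectively with the arcs of $G$ at the corresponding original vertex. The only structural fact I need is that $B(G)$ is bipartite with parts $\{1^-,\dots,n^-\}$ and $\{1^+,\dots,n^+\}$, so that every edge incident to a vertex $i^+$ joins it to some $j^-$, and every edge incident to $i^-$ joins it to some $j^+$. This alone rules out loops and double-counting, and reduces each degree to the cardinality of a neighborhood set.

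First I would compute $d(i^+)$. By the defining correspondence between arcs of $G$ and edges of $B(G)$, a vertex $j^-$ is adjacent to $i^+$ precisely when $(i,j)\in E$. Hence the neighborhood of $i^+$ in $B(G)$ is $\{\,j^- : (i,j)\in E\,\}$, which is in bijection with $N^+(i)=\{\,j : (i,j)\in E\,\}$ through $j^-\mapsto j$; counting both sides gives $d(i^+)=|N^+(i)|=d^+(i)$. Symmetrically, $j^+$ is adjacent to $i^-$ exactly when $(j,i)\in E$, so the neighborhood of $i^-$ is in bijection with $N^-(i)=\{\,j:(j,i)\in E\,\}$, which yields $d(i^-)=|N^-(i)|=d^-(i)$.

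An equivalent route, closer to the block description in \eqref{bipartite}, is to read the degrees off the row sums of the adjacency matrix of $B(G)$. Writing that matrix as $\left(\begin{smallmatrix}0&M\\M^{T}&0\end{smallmatrix}\right)$, the degree of a vertex from one part is the corresponding row sum of $M$ and the degree of a vertex from the other part is the corresponding row sum of $M^{T}$; since the row sums and column sums of $A(G)$ are exactly the out-degrees and in-degrees, the two identities fall out immediately.

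The statement is essentially a definition chase, so there is no substantial obstacle. The one point requiring genuine care is the bookkeeping of the orientation convention: one must ensure that the superscript $+$ is attached to the endpoint of an arc contributing to the out-degree and $-$ to the endpoint contributing to the in-degree. Concretely, to land on the equalities as stated one works with the correspondence $i\to j \iff i^{+}\sim j^{-}$, consistent with the labelling of the parts and with Figure \ref{fig3}; fixing this alignment before counting is what prevents the $+$ and $-$ roles from being inadvertently swapped, which would interchange the two identities.
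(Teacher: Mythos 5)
Your proof is correct and is essentially the paper's own argument: both reduce the claim to counting, for each vertex of $B(G)$, the neighborhood that the arc--edge correspondence places in bijection with $N^+(i)$ or $N^-(i)$ (the paper phrases this as equality of cardinalities of two sets, your matrix row-sum remark being the same computation in block form). Your closing paragraph about the orientation convention is moreover a genuine catch rather than mere bookkeeping: the paper's prose definition of $B(G)$ reads $i\to j\iff i^-\sim j^+$, under which the counting yields the swapped identities $d^+(i)=d(i^-)$ and $d^-(i)=d(i^+)$, so the convention $i\to j\iff i^+\sim j^-$ that you fix is indeed the one consistent with the lemma as stated, with Figure~\ref{fig3}, and with how the lemma is invoked in the proof of Proposition~\ref{PropRand}.
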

\begin{proof}
This is clear by taking cardinalities in set relations
$$|\{j\in[n] |i\to j\}|=|\{j\in[n]|i^-\to j^+\}|$$
and 
$$|\{i\in[n]|i\to j\}|=|\{i\in[n]|i^-\to j^+\}|.$$
\end{proof}

From the above simple lemma one then sees that $2R(G)=R(B(G))$.

\begin{prop} \label{PropRand}
For any directed graph $G$
   \begin{equation} 
   2R(G)=R(B(G)).
\end{equation}
\end{prop}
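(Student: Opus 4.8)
The plan is to prove Proposition~\ref{PropRand} by unwinding both sides through the defining sums and matching them edge-for-edge, using the degree correspondence established in the preceding lemma. The key structural fact is that the map $(i,j)\mapsto\{i^-,j^+\}$ is a bijection between the directed edges $E$ of $G$ and the edges $\overline{E}$ of the bipartite graph $B(G)$; this is exactly the defining property $i\to j \iff i^-\sim j^+$.

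First I would write down $R(G)=\frac{1}{2}\sum_{(v,w)\in E}\frac{1}{\sqrt{d^+(v)d^-(w)}}$ and recall that for the \emph{undirected} graph $B(G)$ the Randic index has the standard form
\begin{equation*}
R(B(G))=\sum_{\{x,y\}\in\overline{E}}\frac{1}{\sqrt{d(x)d(y)}}.
\end{equation*}
Since every edge of $B(G)$ joins a vertex of the form $i^-$ to one of the form $j^+$, I would index the sum over $\overline{E}$ by the pairs $(i,j)$ with $i\to j$ in $G$, writing $\{x,y\}=\{i^-,j^+\}$. Then I would apply the lemma to substitute $d(i^-)=d^-(i)$ and $d(j^+)=d^+(i)$ wait---more precisely $d(j^+)=d^+(j)$; so each summand becomes $1/\sqrt{d^-(i)\,d^+(j)}$.

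Next I would perform the identification of the two sums term by term. Under the bijection, the sum $\sum_{\{x,y\}\in\overline{E}}\frac{1}{\sqrt{d(x)d(y)}}$ equals $\sum_{(i,j)\in E}\frac{1}{\sqrt{d^+(j)\,d^-(i)}}$, which is precisely twice $R(G)$ by the definition of the directed Randic index (the factor $2$ in $R(G)=\frac12\sum\cdots$ being exactly what is absorbed when passing to the unweighted undirected sum over $\overline{E}$). This yields $R(B(G))=2R(G)$, as claimed.

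The only point requiring care---and the main conceptual obstacle---is bookkeeping the symmetry and the factor of $\frac{1}{2}$: in $B(G)$ each undirected edge $\{i^-,j^+\}$ is counted once, whereas the directed definition carries an explicit $\frac12$ compensating for a symmetrization that is natural in the symmetric (undirected) setting from which the digraph definition was adapted. I would therefore verify explicitly that the bijection is genuinely one-to-one with no double-counting of $\{i^-,j^+\}$ versus $\{j^-,i^+\}$---these are distinct vertices of $B(G)$ and hence distinct edges, so no collision occurs---and that the degree roles $d^+$ versus $d^-$ are assigned to the correct endpoints via the lemma. Once this matching is pinned down, the identity $2R(G)=R(B(G))$ follows immediately.
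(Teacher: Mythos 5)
Your overall strategy is the same as the paper's: use the bijection $(i,j)\mapsto\{i^-,j^+\}$ between $E$ and $\overline{E}$, substitute degrees via the preceding lemma, and match the two sums term by term. But the one step you yourself single out as ``the only point requiring care'' --- which degree goes with which endpoint --- is exactly where your proof breaks. Under the stated convention $i\to j\iff i^-\sim j^+$, the vertex $i^-$ is adjacent in $B(G)$ precisely to the vertices $j^+$ with $i\to j$, so $d(i^-)=d^+(i)$ (the \emph{out}-degree of $i$) and $d(j^+)=d^-(j)$ (the \emph{in}-degree of $j$). You instead substitute $d(i^-)=d^-(i)$ and $d(j^+)=d^+(j)$, which turns $R(B(G))$ into $\sum_{(i,j)\in E}1/\sqrt{d^-(i)\,d^+(j)}$, i.e.\ the in-degree of the tail paired with the out-degree of the head. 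That is \emph{not} $2R(G)$: by definition $2R(G)=\sum_{(i,j)\in E}1/\sqrt{d^+(i)\,d^-(j)}$, and the two sums differ in general --- indeed yours need not even be defined. For the directed path $1\to 2\to 3$ one has $d^-(1)=0$ and $d^+(3)=0$, so both of your summands are $1/\sqrt{0}$, while $2R(G)=2$. Your closing assertion that your sum ``is precisely twice $R(G)$ by the definition of the directed Randic index'' is therefore false for the expression you actually derived; this is a genuine gap, not a cosmetic one, since in this proposition the degree bookkeeping is the entire content.

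In fairness, the confusion is partly inherited from the paper, whose conventions are internally inconsistent: the lemma as stated ($d^+(i)=d(i^+)$, $d^-(i)=d(i^-)$) matches the opposite convention $i\to j\iff i^+\sim j^-$, and that is the convention the paper's own proof implicitly uses when it writes the summand $1/\sqrt{d(i^+)d(j^-)}$; meanwhile the lemma's own proof, read literally from the displayed set identities, yields $d^+(i)=d(i^-)$ and $d^-(j)=d(j^+)$, consistent with the stated edge convention. The repair of your argument is immediate once you fix the bookkeeping: compute the degrees in $B(G)$ directly from the edge correspondence rather than quoting the lemma verbatim, obtaining $d(i^-)=d^+(i)$ and $d(j^+)=d^-(j)$. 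Then the summand attached to the edge $\{i^-,j^+\}$ is $1/\sqrt{d^+(i)\,d^-(j)}$, exactly the summand of $2R(G)$, and your term-by-term identification goes through. The part you did verify correctly --- that $\{i^-,j^+\}$ and $\{j^-,i^+\}$ are distinct edges of $B(G)$, so there is no double counting and the factor $\tfrac{1}{2}$ in the directed definition is absorbed properly --- is fine; only the degree assignment needs to be reversed.
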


\begin{proof}
Since $(i,j)\in E(G)$ exactly when $\{i^-,j^+\}\in E(B(G))$, and any edge in $E(B(G))$ is of the form $\{i^-,j^+\}$, for some $i,j\in[n]$, then
\begin{eqnarray} 2R(G)&=&\sum_{(i,j)\in E(G)}\frac{1}{\sqrt{d^+(i)d^-(j)}}\\
&=&\sum_{\{i^-j^+\}\in E(B(G))}\frac{1}{\sqrt{d(i^+)d(j^-)}}\\
&=&R(B(G)).
\end{eqnarray}
\end{proof}



Proofs of Theorems \ref{mainT2}  and \ref{mainT3} now follow from Propositions \ref{PropEn} and \ref{PropRand}, together with equation \eqref{ME}, applied to $B(G)$.

Given a digraph $G=(V,G)$, a vertex $v\in G$ is called a sink if $d^+(v)=0$ and is called a source if $d^-(v)=0$.
We say that $G$ is a sink-source digraph if every vertex of $G$ is either a sink or a source.

A sink-source digraph can be thought as a bipartite graph since we can split the set of 
vertices $V$ into two subsets $V_1, V_2$ and such that $ij\in E$ if and only if $i\in V_1$ and $j \in V_2$,
moreover if $G$ there is a two to one correspondence between weakly connected sink-source digraphs and connected bipartite graphs given by choosing the sink and the source sets.

\begin{defi}
A splitting of a digraph $G$ into sink-source digraphs is a set $\{G_1,\dots, G_n\}$ of digraphs
such that  $G=\cup_{i=1}^nG_i$ with the condition that  $d^{\pm}_{G_i}(v)\neq0$ implies $d^{\pm}_G(v)=d^{\pm}_{G_i}(v)$.
\end{defi} 

 \begin{ejem} Consider the graph $G$ with vertex set $V=\{1,2,3,4,5\}$ and edge set  $E=\{(1,4),(1,5), (2,4), (5,1),(5,2), (5,3)\}$
 then $G$ can be split into sink-source graphs $G_1=(V_1,E_1)=(\{1,2,4,5\},\{(1,4),(1,5), (2,4)\})$ and $G_2=(V_2,E_2)=
 (\{1,2,3,5\},\{(5,1),(5,2), (5,3)\})$, see Figure \ref{sinkdec}.

 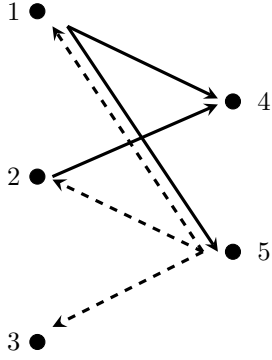
\begin{figure} [h]
    
    \begin{center}

\begin{tikzpicture}[scale=2]
\draw [very thick,->, >=stealth] (.1,.5) -- (1.1,0.02);
\draw [ very thick,->, >=stealth] (.1,.5) -- (1.1,-1);
\draw [ very thick,->, >=stealth] (0,-.5) -- (1.1,-0.02);
\draw [dashed , dashed,very thick,->, >=stealth] (1,-1) -- (0,.5);
\draw [dashed,very thick,->, >=stealth] (1,-1) -- (0,-.52);
\draw [dashed,very thick,->, >=stealth] (1,-1) -- (0,-1.5);
\node[left] at (-.15,.6)  {$1$};
\node[left] at (-.15,-.5)  {$2$};
\node[left] at (-.15,-1.6)  {$3$};

\node[right] at (1.3,0)  {$4$};
\node[right] at (1.3,-1)  {$5$};

\draw[fill] (-.1,.6) circle [radius=0.05];
\draw[fill] (-.1,-.5) circle [radius=0.05];
\draw[fill] (-.1,-1.6) circle [radius=0.05];
\draw[fill] (1.2,0) circle [radius=0.05];
\draw[fill] (1.2,-1) circle [radius=0.05];
\end{tikzpicture}
\end{center}
 \caption{Decomposition of a graph into sink-source graphs}
     \label{sinkdec}
 \end{figure}

 \end{ejem}

\begin{rem} Not every digraph admits a splitting into sink-source digraphs. The graph with $V=\{1,2,3\}$ and
$E=\{(1,2),(1,3),(2,3)\}$ doesn't admit a splitting into sink-source digraphs.
\end{rem}

\begin{thm} Let $G$ be a digraph, then
 $\mathcal{E}(G)= 2R(G)$ if and only if there is a splitting of $G$ into sink-source digraphs $\{G_1,\dots, G_k\}$ such that
 $G_i=\overrightarrow{K}_{n_i,m_i}$, for some $n_i,m_i\in\mathbb{N}$.
\end{thm}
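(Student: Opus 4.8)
The plan is to characterize equality in the inequality $\mathcal{E}(G)\geq 2R(G)$ by tracing back through the proof of Theorem \ref{mainT2} and determining exactly when every inequality used becomes an equality. Recall that the proof rests on two inequalities applied to each edge $e=(v,w)$: first the AM-GM step
$$\frac{\mathcal{E}^+(v)}{d^+(v)}+\frac{\mathcal{E}^-(w)}{d^-(w)}\geq 2\sqrt{\frac{\mathcal{E}^+(v)\mathcal{E}^-(w)}{d^+(v)d^-(w)}},$$
and second the application of Theorem \ref{T1}, namely $\mathcal{E}^+(v)\mathcal{E}^-(w)\geq 1$. Since $\mathcal{E}(G)=\tfrac12\sum_{e\in E}\mathcal{E}(e)$ and each term is bounded below by $2/\sqrt{d^+(v)d^-(w)}$, global equality $\mathcal{E}(G)=2R(G)$ holds if and only if both edge-inequalities are equalities \emph{for every} edge $e\in E$. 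So the task reduces to two local conditions per edge: (i) the AM-GM equality $\mathcal{E}^+(v)/d^+(v)=\mathcal{E}^-(w)/d^-(w)$, and (ii) the Cauchy-Schwarz equality in Theorem \ref{T1}, i.e. $\mathcal{E}^+(v)\mathcal{E}^-(w)=1$.

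The most efficient route to decode these conditions is the Hermitianization trick, which I would use to transfer the problem to the undirected setting where equality in \eqref{ME} is presumably already understood (or can be invoked from \cite{AyA,YLPL}). By Propositions \ref{PropEn} and \ref{PropRand}, $\mathcal{E}(G)=2R(G)$ holds if and only if $\mathcal{E}(B(G))=2R(B(G))$, i.e. equality holds in the lower bound of \eqref{ME} for the bipartite graph $B(G)$. The known characterization for undirected graphs states that $\mathcal{E}(H)=2R(H)$ holds precisely when each connected component of $H$ is a complete bipartite graph $K_{n,m}$ (this is the classical equality case, since for $K_{n,m}$ one computes $\mathcal{E}=2\sqrt{nm}$ and $R=\sqrt{nm}$). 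Thus equality forces every connected component of $B(G)$ to be a complete bipartite graph. The remaining work is to translate ``$B(G)$ is a disjoint union of complete bipartite graphs'' back into a statement about $G$ itself.

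Here is where the sink-source language enters. A connected component of $B(G)$ that is complete bipartite, say on parts $S^-=\{i_1^-,\dots\}$ and $T^+=\{j_1^+,\dots\}$, corresponds under the bijection $i\to j\iff i^-\sim j^+$ to a set of directed edges from the vertices $\{i_1,\dots\}$ to the vertices $\{j_1,\dots\}$ containing \emph{all} arcs $i_\alpha\to j_\beta$; this is exactly a copy of the complete sink-source digraph $\overrightarrow{K}_{n_i,m_i}$. Grouping the components of $B(G)$ therefore yields subgraphs $G_1,\dots,G_k$ of $G$, each of the form $\overrightarrow{K}_{n_i,m_i}$, whose union is $G$. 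The compatibility condition in the definition of splitting --- that $d^{\pm}_{G_i}(v)\neq 0$ implies $d^{\pm}_G(v)=d^{\pm}_{G_i}(v)$ --- is precisely what the disjointness of the $B(G)$-components guarantees, since the degree of $i^+$ (resp. $i^-$) in $B(G)$ equals $d^+(i)$ (resp. $d^-(i)$) by the degree lemma, and an isolated vertex in a component contributes no degree. Conversely, if such a splitting exists, reversing this correspondence shows $B(G)$ is a disjoint union of complete bipartite graphs, giving $\mathcal{E}(G)=2R(G)$.

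The main obstacle I anticipate is the bookkeeping in the back-translation, specifically verifying that the splitting condition $d^{\pm}_{G_i}(v)\neq 0\Rightarrow d^{\pm}_G(v)=d^{\pm}_{G_i}(v)$ matches exactly the graph-theoretic fact that distinct complete-bipartite components of $B(G)$ share no vertices. One must be careful that a vertex $v$ of $G$ may appear as a source in one block $G_i$ and as a sink in another block $G_j$ (its $+$ copy lying in one component of $B(G)$ and its $-$ copy in another), and check that this causes no conflict --- it does not, because the plus-side and minus-side copies of $v$ are genuinely different vertices of $B(G)$, so the out-structure and in-structure of $v$ are split independently. A secondary point requiring care is whether the cited equality characterization for undirected graphs is stated component-wise for disconnected $H$; if \eqref{ME} is only invoked for connected graphs, I would first reduce to connected components of $B(G)$, noting that both $\mathcal{E}$ and $R$ are additive over connected components, so global equality is equivalent to equality on each component.
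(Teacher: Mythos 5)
Your proposal is correct and follows essentially the same route as the paper: transfer the equality to $B(G)$ via Propositions \ref{PropEn} and \ref{PropRand}, invoke the undirected equality characterization from \cite{AyA} (disjoint union of complete bipartite graphs), and translate the complete bipartite components of $B(G)$ back and forth into a sink-source splitting of $G$ into copies of $\overrightarrow{K}_{n_i,m_i}$. The bookkeeping subtlety you flag --- a vertex acting as a source in one block and a sink in another, which is harmless because $v^+$ and $v^-$ are distinct vertices of $B(G)$ --- is exactly the point the paper's proof also handles when verifying the splitting condition.
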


\begin{proof}

 Given a digraph $G$, by using the Hermitianization trick then  $\mathcal{E}(G)= 2R(G)$ if and only if
 $\mathcal{E}(B(G))= 2R(B(G))$. Suppose then that the equality is satisfied, then by Theorem 6 in \cite{AyA}, this implies that $B(G)$ is the disjoint union of complete bipartite graphs. 
 Each complete bipartite  graph ${K}_{n_i,m_i}\subset B(G)$ corresponds to a graph $\overrightarrow{K}_{n_i,m_i}\in
 G $. For each source vertex  $v\in \overrightarrow{K}_{n_i,m_i}$, the fact that the ${K}_{n_i,m_i}$'s are disjoint implies that
$d^+_G (v)=d_{B(G)}(v^+)=n_i$, analogously, for each sink vertex $v\in \overrightarrow{K}_{n_i,m_i}$ we have that
$d^-_G (v)=d_{B(G)}(v^-)=m_i$. Hence $\{\overrightarrow{K}_{n_1,m_1},\dots,\overrightarrow{K}_{n_l,m_k}\}$ is splitting into sink-source digraphs. 

Now, suppose that there is a splitting of $G$ into sink-source digraphs, that is $G$ is the disjoint union $\{\overrightarrow{K}_{n_1,m_1},\dots,\overrightarrow{K}_{n_l,m_k}\}$, and suppose that $v\in \overrightarrow{K}_{n_i,m_i}$, is a source, then  $n_i=d^+_G(v)$, similarly for each sink  $w\in \overrightarrow{K}_{n_i,m_i}$ we have that 
$m_i=d^-_G(w)$. Let now $V_i=\{v^i_1,\dots,v^i_{n_i}\}$ denote the set of sources of $\overrightarrow{K}_{n_i,m_i}$ and  $W^i\{w^i_1,\dots,w^i_{m_i}\}$ the set of sinks, since $n_i=d^+_G(v)$ then  $(v^i_j,w)\in E(G)$ implies that $w=w^i_k$ for some $k\in\{1,\dots,m_i\}$, analogously, since  $m_i=d^-_G(w)$, then $(v,w^i_j)\in E(G)$ implies that $v=v^i_k$ for some $k\in\{1,\dots,n_i\}$, that is, the edges that start in $V^i$ always end in $W^i$ and the edges that
end in $W^i$ always start in $V^i$. The corresponding graph in $B(G)$ is a complete bipartite graph $K_{n_i,m_i}$ with the set $V^+_i=\{(v^i)^+_1,\dots,(v^i)^+_{n_i}\}$ connected to the set  $W^-_i=\{(w^i)^-_1,\dots,(w^i)^-_{m_i}\}$ which is disjoint with other graphs since, the last statement implies that all the elements of $V^+_i$ are only connected with the elements of $W^-_i$ and all the elements of $W^-i$ are only connected with the elements of $V^+_i$. Consequently, $B(G)$ is a union of complete bipartite graphs and
 $\mathcal{E}(B(G))= 2R(B(G))$. This completes the proof.

 \end{proof}

\begin{lem}\label{lemCP}
Let $G$ be a weakly connected finite simple digraph such that for all vertices
$d^{\pm}(v)\leq 1$ then $G$ is either a directed path or a directed cycle.
\end{lem}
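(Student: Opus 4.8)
The plan is to pass to the underlying undirected graph $H$ of $G$ and combine the standard fact that a connected graph of maximum degree at most two is a path or a cycle with a local degree-counting argument that recovers the orientation.

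First I would dispose of the degenerate case of a directed $2$-cycle. If both $(v,w)\in E$ and $(w,v)\in E$, then each of $v,w$ already attains $d^+=d^-=1$, so no further edge can be incident to $v$ or $w$; hence $\{v,w\}$ is a whole weak component, and weak connectivity forces $V=\{v,w\}$, so $G$ is a directed cycle of length two and we are done. Thus I may assume $G$ has no $2$-cycle, in which case every directed edge contributes a distinct undirected edge of $H$, and the degree of any vertex satisfies $\deg_H(v)=d^+(v)+d^-(v)\le 2$.

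Since $G$ is weakly connected, $H$ is connected, and a connected undirected graph of maximum degree at most two is either a path or a cycle. It then remains to pin down the orientations, and the key local observation is that whenever $\deg_H(v)=2$ the constraints $d^+(v),d^-(v)\le 1$ together with $d^+(v)+d^-(v)=2$ force $d^+(v)=d^-(v)=1$; that is, at every such vertex exactly one incident edge points in and one points out. If $H$ is a cycle $v_0\!-\!v_1\!-\cdots-\!v_{k-1}\!-\!v_0$, every vertex has $H$-degree two, so propagating this observation around the cycle (if $\{v_i,v_{i+1}\}$ leaves $v_i$ it must enter $v_{i+1}$, forcing $\{v_{i+1},v_{i+2}\}$ to leave $v_{i+1}$) shows all edges are coherently oriented and $G$ is a directed cycle. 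If $H$ is a path $v_0\!-\!v_1\!-\cdots-\!v_k$, the same propagation, started from the orientation of the end edge $\{v_0,v_1\}$ and applied at each interior vertex, forces $v_0\to v_1\to\cdots\to v_k$ (or its reverse), i.e. a directed path.

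The steps requiring care are the forced-orientation propagation and the harmless bookkeeping that removes $2$-cycles before reducing to $H$; the structural input that a connected graph of maximum degree at most two is a path or a cycle is standard and can be invoked directly. I expect the main obstacle to be nothing conceptually deep but rather stating the orientation propagation cleanly, since one must verify that the single local constraint ``one in, one out'' genuinely determines a global directed path/cycle rather than merely an alternating pattern; the injectivity hidden in $d^\pm\le 1$ (distinct edges cannot share a head or a tail) is what rules out inconsistent orientations.
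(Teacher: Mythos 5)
Your proof is correct, but it takes a genuinely different route from the paper's. The paper argues directly on the digraph: it splits into cases according to whether $G$ has a source (a vertex with $d^-=0$), a sink (a vertex with $d^+=0$), or neither (so that $d^+(v)=d^-(v)=1$ for every vertex), and in each case follows the unique out-neighbour from vertex to vertex; in the first two cases the walk terminates and yields a directed path, while in the third case finiteness forces the walk to close up into a directed cycle, which is then a whole weak component. You instead pass to the underlying undirected graph $H$, invoke the standard classification of connected graphs of maximum degree at most two as paths or cycles, and then recover the orientation by the local ``one in, one out'' propagation. Note that splitting off the digon case first is essential for your reduction, not just bookkeeping: a $2$-cycle collapses to a single edge of $H$ and would otherwise be misclassified as a path $P_2$; you handle this correctly, whereas in the paper's traversal the $2$-cycle needs no special treatment since it simply appears as a closed walk of length two in the all-degrees-one case. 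What the paper's approach buys is self-containedness (no undirected structure theorem is cited); what yours buys is modularity, since the only genuinely directed content is the orientation propagation, and the termination/non-repetition bookkeeping that the paper's inductive construction of the sequence $v_1,v_2,\dots$ leaves somewhat informal is absorbed into the cited undirected fact.
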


\begin{proof}

Let $n$ is the number of elements in $G$. If $n=1$ then $G$ is an isolated vertex. 

When $n>1$ we divide into two cases.

\textbf{Case 1}. There is a vertex $v_1$ with $d^-(v_1)=0$. Then $d^+(v)=1$ which means that there is a unique element $v_2$ such that $(v_1,v_2)$ is an edge, if $n>2$ then $d^+(v)=1$ since, if this is false, we will have that $\{v_1,v_2\}$ is a weakly connected component of $G$ because there cannot be other edges pointing $v_2$ by the fact that $d^-(v_2)\leq 1$. So there is a unique element $v_3$ such that $(v_2,v_3)$ is an edge. Inductively, there should be a sequence of vertices $v_1,\dots,v_n$ such that $(v_i,v_{i+1})\in E(G)$ for all $i\in\{1,\dots, n-1\}$. This means $G$ is a directed path.

\textbf{Case 2.} There is a vertex $v_1$ with $d^-(v_1)=0$. The same proof as case 1 works with the obvious modifications.

\textbf{Case 3.} For every vertex $v\in V$, $d^+(v)=d^-(v)=1$. Take now any vertex $v_1\in V$ then since for every vertex $d^+(v)=1$ we can form unique sequence of vertices $\{v_i\}_{i\in \mathbb N}$  such that $(v_i,v_{i+1})\in E(G)$ since the graph is finite this sequence should repeat. That is, there is some $k$ and $l$ such that $v_{k+l}=v_{l}$. Now, consider the directed path $v_l\to v_{l+1}\to v_{l+k}=v_l$. For each $v_i$, since $d^+(v)=d^-(v)=1$, there is a unique $w$ such that $w,v_i\in E(G)$, which should be $v_{i-1}$ and similarly, $v_{i+1}$ is the unique vertex after $v_i$.  This means that $\{v_l,\dots,v_{k+1}\}$ is directed cycle and a weakly connected component. Thus $k=n$ and $G$ must be this directed cycle.
\end{proof}

\begin{thm}
Let $G$ be a digraph with  energy $\mathcal{E}(G)$ and Randic index $R(G)$ and maximum degree $\Delta(G)$ then $\mathcal{E}(G)= 2\sqrt{\Delta(G)}R(G)$ if and only if $G$ is the disjoint union of directed cycles $\overrightarrow C_n$, directed paths  $\overrightarrow P_n$ and isolated vertices. 
\end{thm}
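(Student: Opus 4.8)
The plan is to determine exactly when the two inequalities used in the proof of Theorem~\ref{mainT3} are simultaneously tight, reduce this to the single condition $\Delta(G)\le 1$, and then invoke Lemma~\ref{lemCP} for the structural conclusion. Since the equality theorem for the lower bound was handled through the Hermitianization trick, I would proceed the same way, transporting the problem to the undirected graph $B(G)$.

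For the forward implication, assume $\mathcal{E}(G)=2\sqrt{\Delta(G)}R(G)$. If $G$ is edgeless both sides vanish and $G$ is a union of isolated vertices, so assume $\Delta(G)\ge 1$. By Propositions~\ref{PropEn} and~\ref{PropRand}, together with $\Delta(B(G))=\Delta(G)$ (immediate from the degree correspondence of the Hermitianization lemma), the hypothesis is equivalent to $\mathcal{E}(B(G))=2\sqrt{\Delta(B(G))}\,R(B(G))$, i.e.\ the equality case of the undirected bound~(\ref{ME}) for $B(G)$. The crucial step is that this undirected equality forces $\Delta(B(G))\le 1$: equality in~(\ref{ME}) requires $\mathcal{E}(v)=\sqrt{d(v)}$ at every vertex of $B(G)$, so that $\sum_v \mathcal{E}(v)^2=\sum_v d(v)=\mathrm{Tr}(A^2)=\sum_{v,w}|A|_{vw}^2$; since $\mathcal{E}(v)=|A|_{vv}$ this kills all off-diagonal entries, making $|A|$ and hence $A^2$ diagonal, which says no two vertices of $B(G)$ share a common neighbour — impossible as soon as a vertex has degree $\ge 2$. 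Hence $\Delta(B(G))\le 1$, so $d^+(i),d^-(i)\le 1$ for every $i$, and applying Lemma~\ref{lemCP} to each weakly connected component presents $G$ as a disjoint union of directed cycles $\overrightarrow{C}_n$, directed paths $\overrightarrow{P}_n$, and isolated vertices.

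For the converse, if $G$ is such a disjoint union then all in- and out-degrees are at most $1$, so each edge $(i,j)$ gives a component $\{i^-,j^+\}$ of $B(G)$ whose two endpoints have degree $1$; thus $B(G)$ is a disjoint union of copies of $K_2$ and isolated vertices, where~(\ref{ME}) is an equality (each $K_2$ has $\mathcal{E}=2=2\sqrt{1}\,R$), and Propositions~\ref{PropEn} and~\ref{PropRand} carry this back to $\mathcal{E}(G)=2\sqrt{\Delta(G)}R(G)$. I expect the reduction to $\Delta\le 1$ to be the main obstacle: one must rule out tightness for any $B(G)$ with a degree-$2$ vertex, which I would secure either by citing the undirected equality characterization from \cite{AyA,YLPL} or by the trace/Cauchy--Schwarz argument above; care is also needed in verifying that $\mathcal{E}(v)=\sqrt{d(v)}$ extends from non-isolated to all vertices, and in treating the edgeless case separately. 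An equivalent self-contained route avoids $B(G)$ entirely: the same equality analysis applied directly to Theorem~\ref{mainT3} gives $\mathcal{E}^\pm(v)=\sqrt{d^\pm(v)}$ for all $v$, forcing $|A|^+$ and $|A|^-$ — equivalently $AA^t$ and $A^tA$ — to be diagonal, which translates to $d^-(u)\le 1$ and $d^+(u)\le 1$ for all $u$, again yielding $\Delta(G)\le 1$.
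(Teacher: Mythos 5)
Your proposal is correct, and its skeleton coincides with the paper's: both directions transfer the problem to $B(G)$ via Propositions~\ref{PropEn} and~\ref{PropRand} together with $\Delta(G)=\Delta(B(G))$, reduce the forward implication to the statement that all degrees of $B(G)$ are at most $1$, and then apply Lemma~\ref{lemCP} componentwise. The difference is in how that degree bound is obtained. The paper simply cites Theorem 16 of \cite{YLPL}, which characterizes equality in the undirected bound as $B(G)$ being a disjoint union of $P_2$'s and isolated vertices. You instead prove the needed implication from scratch: equality forces $\mathcal{E}(v)=\sqrt{d(v)}$ at every vertex of $B(G)$, whence $\sum_v |A|_{vv}^2=\sum_v d(v)=\mathrm{Tr}(A^2)=\sum_{v,w}|A|_{vw}^2$, so $|A|$, and hence $A^2$, is diagonal, and no vertex can have two neighbours. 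This trace/Cauchy--Schwarz argument is a genuine bonus: it makes the equality characterization self-contained (resting only on the vertex bound $\mathcal{E}(v)\le\sqrt{d(v)}$ and the edge-by-edge structure of the proof of Theorem~\ref{mainT3}), and your observation that the same analysis runs directly on $G$ --- forcing $AA^t$ and $A^tA$ to be diagonal, i.e.\ $d^-(u)\le 1$ and $d^+(u)\le 1$ --- shows the Hermitianization can be bypassed entirely in the forward direction. Your converse also reroutes through $B(G)$, which becomes a disjoint union of $K_2$'s and isolated vertices, thereby avoiding the paper's direct computation of $\mathcal{E}(\overrightarrow{C}_n)=n$ and $\mathcal{E}(\overrightarrow{P}_n)=n-1$; both verifications are routine, and your handling of the edgeless and isolated-vertex edge cases is careful where the paper is silent.
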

 \begin{proof}
 
 Suppose that $G$ is a disjoint union of directed paths  or directed cycles then since for each edge $(i,j)$ $d^+(i)=d^-(j)=1$ then $R(G)=1/2a$ where $a$  is the number of arcs. A direct calculation shows that $\mathcal E(\overrightarrow C_n)=n$  and
$\mathcal E(\overrightarrow P_n)=n-1$, which coincides with their number of arcs.

Now suppose that the equality holds. Since $\Delta(G)=\Delta(B(G))$ and $2R(G)=R(B(G))$ and $2\mathcal{E}(G)=\mathcal{E}(B(G))$ then the equality holds if and only if it holds for $B(G)$. Using Theorem 16 in  \cite{YLPL} this is equivalent to
$B(G)$ being the disjoint union of the path $P_2$ and isolated vertices.  Since all degrees in $B(G)$ are either $0$ or $1$ this implies that all in-degrees and out-degrees of  $G$ are also $0$ or $1$. The use of Lemma \ref{lemCP} finishes the proof.

 \end{proof}

\noindent Department of Actuarial Sciences, Physics and Mathematics. Universidad de las Am\'{e}ricas Puebla. San Andr\'{e}s Cholula, Puebla. M\'{e}xico.\\\noindent Email: \emph{gerardo.arizmendi@udlap.mx}
	\\~
	
\noindent Department of Probability and Statistics. Centro de Investigaci\'{o}n en Matem\'aticas, Guanajuato, M\'{e}xico. \\\noindent Email: \emph{octavius@cimat.mx}
\end{document}